

\documentclass[ECP, preprint]{ejpecp} 


\usepackage[T1]{fontenc}
\usepackage[utf8]{inputenc}
\usepackage{xcolor}
\usepackage{pgfplots}
\pgfplotsset{compat=1.17}




\SHORTTITLE{Limit shape formulas for a generalized Sepp\"al\"ainen--Johansson model}

\TITLE{Limit shape formulas for a generalized Sepp\"al\"ainen--Johansson model
   } 



\AUTHORS{%
  Julian~Ransford\footnote{University of Toronto, Ontario, Canada.
    \EMAIL{julian.ransford@mail.utoronto.ca}}
 }



\KEYWORDS{First-passage percolation; Sepp\"al\"ainen--Johansson model; limit shape phenomenon; Tracy--Widom distribution; KPZ universality} 

\AMSSUBJ{60K37; 60K35} 

\SUBMITTED{January 11, 2024} 
\ACCEPTED{} 




\VOLUME{0}
\YEAR{2020}
\PAPERNUM{0}
\DOI{10.1214/YY-TN}


\ABSTRACT{We consider a simplified model of first-passage percolation, involving two families of i.i.d. random variables $\{\xi_{ij}\}$ and $\{\eta_{ij}\}$ corresponding to the weights of the horizontal and vertical edges respectively. We obtain an explicit formula for the limiting shape of the first-passage distance expressed in terms of the corresponding limit shapes of the two sets of weights for the Sepp\"al\"ainen--Johansson model. We also study the limiting fluctuations of this model when at least one of the sets of weights is Bernoulli distributed.}



\newcommand \RR{\mathbb{R}}

\newcommand \PP{\mathbb{P}}

\newcommand \ZZ{\mathbb{Z}}
\newcommand \EE{\mathbb{E}}
\newcommand \Dc{\mathcal{D}}
\newcommand \Ec{\mathcal{E}}
\newcommand \Ac{\mathcal{A}}
\newcommand \Fc{\mathcal{F}}
\newcommand \ep{\epsilon}
\newcommand \om{\omega}
\newcommand \tom{\widetilde{\omega}}


\begin{document}



\section{Introduction}

Let $B_{ij}, i,j \ge 0$ be i.i.d Bernoulli($p$) random variables, and $\xi_{ij}, \eta_{ij} \, i,j \ge 0$ be families of independent random variables. We assume that the $\xi_{ij}$'s are non-negative, integrable and have a common distribution, and likewise for the $\eta_{ij}$'s (though the distribution of the $\xi_{ij}$'s can be different of that of the $\eta_{ij}$'s). We place weights on the edges $e$ of $\ZZ_{\ge 0}^2$ as follows.
\begin{itemize}
\item If $e$ is the horizontal edge joining $(i-1,j)$ to $(i,j)$, then $e$ has weight $\om_e=B_{ij}\xi_{ij}$.
\item If $e$ is the vertical edge joining $(i-1,j)$ to $(i,j)$, then $e$ has weight $\om_e=(1-B_{ij})\eta_{ij}$.
\end{itemize}
So for any vertex $(i,j)$, one of the two ``incoming'' edges $(i-1,j) \to (i,j)$ or $(i,j-1) \to (i,j)$ will have weight 0. Given an up-right path $\pi$ (i.e a sequence of adjacent vertices which only goes up or right), we define its weight $S(\pi)$ as
\[S(\pi)=\sum_{e \in \pi} \om_e\]
where the sum is taken over the edges that $\pi$ traverses; see Figure \ref{fig: path}. For two points $(a,b)$ and $(m,n)$ with $a \le m$ and $b \le n$, we define the \emph{first-passage value} from $(a,b)$ to $(m,n)$ as
\begin{equation}\label{eq: fp def}
F(a,b;m,n)=\min_{\pi: (a,b) \to (m,n)} S(\pi)
\end{equation}
where the minimum is taken over all up-right paths $\pi$ started at $(a,b)$ and finishing at $(m,n)$. We write $F(m,n)$ for $F(0,0;m,n)$. A path which achieves the minimum in \eqref{eq: fp def} is called a \emph{geodesic}.

The function $F$ defines a \emph{directed metric} on $\ZZ_{\ge 0}^2 \times \ZZ_{\ge 0}^2$, in the sense that for any point $(m,n)$, we have $F(m,n;m,n)=0$, and $F$ satisfies the triangle inequality, but the distance can only be measured in one direction; $F(a,b;m,n)$ is only defined when $a \le m$ and $b \le n$.

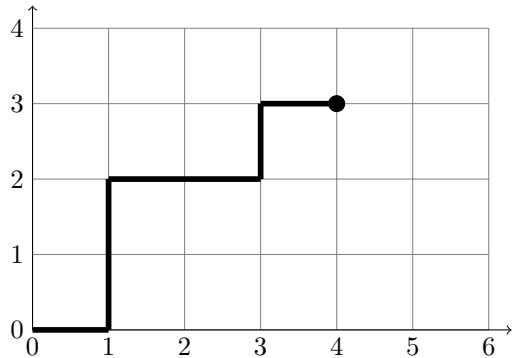
\begin{figure}
\centering
	\begin{tikzpicture}
	\draw[very thin,color=gray] (0,0) grid (6,4);
	\draw[->] (0,0) -- (6.3,0) node[right] {};
	\draw[->] (0,0) -- (0,4.3) node[above] {};
	
	\foreach \i in {0,1,...,6}{
	\coordinate[label=below:$\i$] (x) at (\i,0);
	}
	
	\foreach \i in {0,1,...,4}{
	\coordinate[label=left:$\i$] (x) at (0,\i);
	}
	
	\filldraw[black]  (4,3) circle (3pt);
	
	\draw[-,color=black,line width=0.75mm] (0,0) -- (1,0) node[right]{};
	\draw[-,color=black,line width=0.75mm] (1,0) -- (1,2) node[right]{};
	\draw[-,color=black,line width=0.75mm] (1,2) -- (3,2) node[right]{};
	\draw[-,color=black,line width=0.75mm] (3,2) -- (3,3) node[right]{};
	\draw[-,color=black,line width=0.75mm] (3,3) -- (4,3) node[right]{};	
	
	\end{tikzpicture}
\caption{An upright path from (0,0) to (4,3). The weight of this path is $B_{1,0}\xi_{1,0}+(1-B_{1,1})\eta_{1,1}+(1-B_{1,2})\eta_{1,2}+B_{2,2}\xi_{2,2}+B_{3,2}\xi_{3,2}+(1-B_{3,3})\eta_{3,3}+B_{4,3}\xi_{4,3}$.}
\label{fig: path}
\end{figure}

The special case when all the $B_{ij}$'s are equal to one (i.e only horizontal edges have non-zero weights) is known as the Sepp\"al\"ainen--Johansson (SJ) model. It was introduced by Sepp\"al\"ainen in \cite{sepp98} as a simplified model of directed first-passage percolation, and Johansson showed in \cite{johan01} that in the special case of Bernoulli weights, the model is completely solvable. Indeed, the law of the last-passage value (that is where we take maximum instead of minimum in \eqref{eq: fp def}) is the same as that of the top point of the Krawtchouk ensemble, a discrete orthogonal polynomial ensemble. 

By Kingman's subadditive ergodic theorem, there exists a deterministic function $f$ on $\RR_{\ge 0}^2$ (which depends on the distribution of the weights) such that 
\[\frac{F(\lfloor nx \rfloor, \lfloor ny \lfloor)}{n} \to f(x,y)\]
almost surely for all $x,y \ge 0$. We will refer to $f$ as the \emph{limit shape}. It follows from the translation invariance of this model that $f$ must be homogeneous (i.e $f(cx,cy)=cf(x,y)$ for all $c \ge 0$), and together with the triangle inequality for $F$, we have that $f$ also satisfies a triangle inequality:
\[f(x_1+x_2,y_1+y_2) \le f(x_1,y_1)+f(x_2,y_2).\]
The homogeneity plus triangle inequality clearly imply that $f$ must be convex, and so in particular it is continuous.

Sepp\"al\"ainen obtained in \cite{sepp98} the following explicit formula for the limit shape in the SJ model with Bernoulli($p$) weights:
\begin{equation}\label{eq: Bernoulli formula}
f(x,y)=
\begin{cases}
(\sqrt{px}-\sqrt{(1-p)y})^2 & \quad \text{if } x \ge \frac{1-p}{p}y\\
0 & \quad \text{otherwise}.
\end{cases}
\end{equation}
Similar formulas are known in the SJ model for geometric and exponential weights; see \cite{oco99} for example. The most general cases for which we can compute explicitly the limit shape are when $\xi_{ij}\overset{d}{=}B \cdot X$, where $B$ is Bernoulli distributed, and $X$ is a geometric or exponential independent of $B$; see \cite{mart09}.

The generalized SJ model that we are considering first appears in \cite{bar-cor17} for $\xi_{ij}$ and $\eta_{ij}$ exponentially distributed as the zero temperature limit of the beta polymer. The authors show that the distribution function of a point to half-line first-passage percolation version of this model can be written down explicitly as a Fredholm determinant, and as a result, they obtain Tracy--Widom limiting fluctuations.

Much more is known about the SJ model for Bernoulli weights. Indeed, in \cite{dauv-nica-vir23}, it was shown that the scaled fluctuations of the first-passage value function converges in distribution with respect to uniform convergence on compact sets to the Airy$_2$ process. Using this, we obtain, in the Bernoulli case, the limiting distribution of the scaled fluctuations of $F(nx,ny)$, for points satisfying $x \neq (1-p)y/p$. 

Along the line $x=(1-p)y/p$, the fluctuations of $F$ behave very differently. If we scale space diffusively, then the fluctuations of $F$ are of order 1, and $F$ has a completely different scaling limit: the Brownian web distance. To see why the Brownian web is involved, consider the subgraph of $\ZZ^2$ obtained by only keeping the edges $e$ whose corresponding weight $\om_e$ is 0. The resulting subgraph is then a system of coalescing random walks moving in the south-west direction; see Figure \ref{fig: RW web}. To each removed edge, we associate a cost of jumping across it given by the weight of that edge. Then $F(a,b;m,n)$ is precisely the smallest total cost one has to pay while traversing through the random walk web to get from $(a,b)$ to $(m,n)$. In the special case where the $\xi_{ij}'s$ and $\eta_{ij}'s$ are all equal to 1, this is simply the smallest number of jumps one has to make between the different random walks. The diffusive limit of the random walk web is the Brownian web, and so $F$ converges to a directed metric on the Brownian web. This is proven in \cite{vet-vir23}. See \cite{fon-iso-new-rav04} and the references therein for a survey on the Brownian web.
\begin{figure}
\centering
\begin{tikzpicture}[scale=0.8]
\draw[color=gray, step=0.5, line width=0.3mm, dotted] (0,0) grid (7.5,7.5);
	\draw[-] (0,0) -- (7.8,0) node[right] {};
	\draw[-] (0,0) -- (0,7.8) node[above] {};

\foreach \i\j in {1.5/0, 2.0/0, 3.5/0, 4.5/0, 5.0/0, 6.0/0, 6.5/0, 7.0/0, 7.5/0, 0.5/1.0, 0.5/1.5, 0.5/3.5, 0.5/5.5, 1.0/3.0, 1.0/3.5, 1.0/5.0, 1.0/6.5, 1.0/7.0, 1.0/7.5, 1.5/0.5, 1.5/1.5, 1.5/2.5, 1.5/3.0, 1.5/4.5, 1.5/6.5, 1.5/7.0, 1.5/7.5, 2.0/0.5, 2.0/1.5, 2.0/2.0, 2.0/2.5, 2.0/3.0, 2.0/3.5, 2.0/4.0, 2.0/6.5, 2.0/7.0, 2.5/0.5, 2.5/1.0, 2.5/1.5, 2.5/2.0, 2.5/3.5, 2.5/4.5, 2.5/5.0, 2.5/7.0, 3.0/0.5, 3.0/1.0, 3.0/1.5, 3.0/2.0, 3.0/2.5, 3.0/3.0, 3.0/3.5, 3.0/5.0, 3.0/5.5, 3.0/6.0, 3.0/6.5, 3.0/7.5, 3.5/1.0, 3.5/2.0, 3.5/3.0, 3.5/3.5, 3.5/5.5, 3.5/6.0, 3.5/6.5, 3.5/7.5, 4.0/0.5, 4.0/2.5, 4.0/3.0, 4.0/3.5, 4.0/4.0, 4.0/5.0, 4.0/6.5, 4.0/7.0, 4.0/7.5, 4.5/0.5, 4.5/1.0, 4.5/1.5, 4.5/2.0, 4.5/3.0, 4.5/4.0, 4.5/4.5, 4.5/5.0, 4.5/6.0, 4.5/7.0, 5.0/0.5, 5.0/1.0, 5.0/1.5, 5.0/2.0, 5.0/3.0, 5.0/3.5, 5.0/4.0, 5.0/7.0, 5.5/2.0, 5.5/3.0, 5.5/4.0, 5.5/5.0, 5.5/6.0, 5.5/6.5, 5.5/7.0, 5.5/7.5, 6.0/1.0, 6.0/1.5, 6.0/2.0, 6.0/2.5, 6.0/3.5, 6.0/4.0, 6.0/4.5, 6.5/0.5, 6.5/1.0, 6.5/2.0, 6.5/2.5, 6.5/4.0, 6.5/5.0, 6.5/6.0, 6.5/6.5, 6.5/7.5, 7.0/4.5, 7.0/5.5, 7.0/6.5, 7.0/7.5, 7.5/0.5, 7.5/3.0, 7.5/4.0, 7.5/4.5, 7.5/6.0, 7.5/6.5, 7.5/7.0, 7.5/7.5}{
\draw[-, color=black, line width=0.5mm] (\i-0.5,\j) -- (\i,\j);
	}
\foreach \i\j in {0/1.0, 0/3.0, 0/4.0, 0/4.5, 0/5.0, 0/5.5, 0/6.0, 0/6.5, 0/7.0, 0/7.5, 0.5/0.5, 0.5/2.0, 0.5/2.5, 0.5/3.0, 0.5/4.0, 0.5/4.5, 0.5/5.0, 0.5/6.0, 0.5/6.5, 0.5/7.0, 0.5/7.5, 1.0/0.5, 1.0/1.0, 1.0/1.5, 1.0/2.0, 1.0/2.5, 1.0/4.0, 1.0/4.5, 1.0/5.5, 1.0/6.0, 1.5/1.0, 1.5/2.0, 1.5/3.5, 1.5/4.0, 1.5/5.0, 1.5/5.5, 1.5/6.0, 2.0/1.0, 2.0/4.5, 2.0/5.0, 2.0/5.5, 2.0/6.0, 2.0/7.5, 2.5/2.5, 2.5/3.0, 2.5/4.0, 2.5/5.5, 2.5/6.0, 2.5/6.5, 2.5/7.5, 3.0/4.0, 3.0/4.5, 3.0/7.0, 3.5/0.5, 3.5/1.5, 3.5/2.5, 3.5/4.0, 3.5/4.5, 3.5/5.0, 3.5/7.0, 4.0/1.0, 4.0/1.5, 4.0/2.0, 4.0/4.5, 4.0/5.5, 4.0/6.0, 4.5/2.5, 4.5/3.5, 4.5/5.5, 4.5/6.5, 4.5/7.5, 5.0/2.5, 5.0/4.5, 5.0/5.0, 5.0/5.5, 5.0/6.0, 5.0/6.5, 5.0/7.5, 5.5/0.5, 5.5/1.0, 5.5/1.5, 5.5/2.5, 5.5/3.5, 5.5/4.5, 5.5/5.5, 6.0/0.5, 6.0/3.0, 6.0/5.0, 6.0/5.5, 6.0/6.0, 6.0/6.5, 6.0/7.0, 6.0/7.5, 6.5/1.5, 6.5/3.0, 6.5/3.5, 6.5/4.5, 6.5/5.5, 6.5/7.0, 7.0/0.5, 7.0/1.0, 7.0/1.5, 7.0/2.0, 7.0/2.5, 7.0/3.0, 7.0/3.5, 7.0/4.0, 7.0/5.0, 7.0/6.0, 7.0/7.0, 7.5/1.0, 7.5/1.5, 7.5/2.0, 7.5/2.5, 7.5/3.5, 7.5/5.0, 7.5/5.5}{
\draw[-, color=black, line width=0.5mm] (\i,\j-0.5) -- (\i,\j);
	}
\end{tikzpicture}
\caption{A portion of the random walk web obtained from the $B_{ij}$'s. If $B_{ij}=0$, an edge is placed from $(i-1,j)$ to $(i,j)$; otherwise, the edge is placed from $(i,j-1)$ to $(i,j)$.}
\label{fig: RW web}
\end{figure}
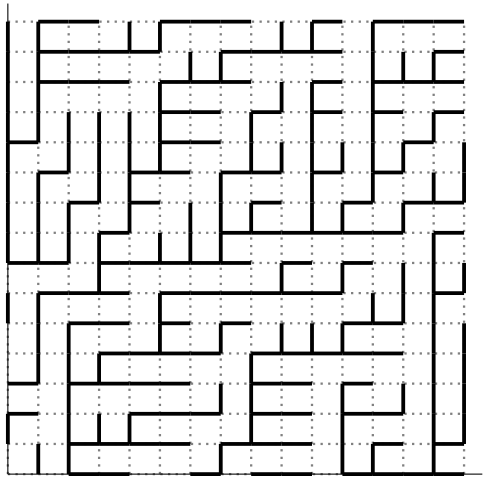

\section{Main results}

Let $F_H(m,n)$ be the first-passage value at $(m,n)$ in the SJ model with weights $B_{ij}\xi_{ij}$ on horizontal edges $(i-1,j) \to (i,j)$ and $F_V(m,n)$ the first-passage value at $(m,n)$ for the SJ model on vertical edges with weights $(1-B_{ij})\eta_{ij}$ (that is, we change all the weights on vertical edges to be 0 for $F_H$ and all the weights on horizontal edges to be 0 for $F_V$). Let $f_H$ and $f_V$ be the corresponding limit shapes. Clearly, we have $F_H(m,n) \le F(m,n)$, since we can simply ignore any weights picked up along vertical edges when following a geodesic for $F$ and this gives a valid path for $F_H$. Similarly, we also have $F_V(m,n) \le F(m,n)$. Passing to the limit, this implies the corresponding bound on limit shapes:
\begin{equation}\label{eq: lower bound LS}
\max(f_H(x,y), f_V(x,y)) \le f(x,y)
\end{equation}
for all $x,y \ge 0$. Our first result is that equality holds in \eqref{eq: lower bound LS}.
\begin{theorem}\label{th: LS thm}
\[\max(f_H(x,y), f_V(x,y)) = f(x,y)\]
\end{theorem}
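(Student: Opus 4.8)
The lower bound $\max(f_H,f_V)\le f$ being in hand, everything reduces to showing $f(x,y)\le\max(f_H(x,y),f_V(x,y))$. A first remark shapes the argument: if $\pi$ is an $F$-geodesic to $(m,n)$ and $S_H(\pi),S_V(\pi)$ denote the horizontal and vertical parts of its weight, then $\pi$ is in particular an admissible path for the horizontal-only model, so $S_H(\pi)\ge F_H(m,n)$, and likewise $S_V(\pi)\ge F_V(m,n)$; hence $F(m,n)=S_H(\pi)+S_V(\pi)\ge F_H(m,n)+F_V(m,n)$ and $f\ge f_H+f_V$. Combined with the identity we want to prove, this forces $\min(f_H,f_V)\equiv 0$, so it is cleanest to establish that directly first. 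I would do so from the (soft) vanishing criterion for SJ limit shapes: for i.i.d.\ weights with an atom of mass $q$ at $0$, the SJ limit shape vanishes exactly on $\{x\le\frac{q}{1-q}y\}$ — greedily taking the lowest $0$-weight crossing in each successive column produces a staircase whose height increments are geometric with mean $\frac{1-q}{q}$, so it survives to $(\lfloor nx\rfloor,\lfloor ny\rfloor)$ iff $x\le\frac{q}{1-q}y$. Applied to the horizontal weights $B_{ij}\xi_{ij}$ (atom mass $q_H=(1-p)+p\,\PP(\xi=0)$) and the vertical weights $(1-B_{ij})\eta_{ij}$ (atom mass $q_V=p+(1-p)\,\PP(\eta=0)$), together with $q_H+q_V=1+p\PP(\xi=0)+(1-p)\PP(\eta=0)\ge 1$, this yields $\{f_V>0\}\subseteq\{f_H=0\}$. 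Consequently the symmetry $(x,y,\xi,\eta,p)\leftrightarrow(y,x,\eta,\xi,1-p)$ lets us assume $f_V(x,y)\le f_H(x,y)$, which then forces $f_V(x,y)=0$, and the remaining task is to prove $f(x,y)\le f_H(x,y)$ whenever $x\ge\frac{1-q_V}{q_V}y$.

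For this, fix $\veps>0$ and $n$ large. Since any up-right path $\pi$ from $(0,0)$ to $(\lfloor nx\rfloor,\lfloor ny\rfloor)$ satisfies $S_H(\pi)\ge F_H(\lfloor nx\rfloor,\lfloor ny\rfloor)=f_H(x,y)n+o(n)$, it suffices to exhibit such a path with $S_H(\pi)\le f_H(x,y)n+\veps n$ and $S_V(\pi)=o(n)$. Recall from the introduction that the weight-$0$ edges form the random walk web and that a path incurs a vertical weight only when it makes an upward step into a vertex with $B_{ij}=0$ and $\eta_{ij}>0$ — an upward step off the web. An $F_H$-geodesic realizes the horizontal cost $f_H(x,y)n+o(n)$, but from the point of view of the horizontal-only model it places its $\Theta(n)$ upward steps ``at random'', so a constant fraction of them land on $B=0$ vertices and $S_V$ is of order $n$. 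The plan is to start from an $F_H$-geodesic and reroute it so that its upward steps preferentially land on $B=1$ vertices, spending the excess budget $\veps n$ on the rerouting; the hypothesis $f_V(x,y)=0$, i.e.\ $x\ge\frac{1-q_V}{q_V}y$, is precisely what says the $B=1$ vertices are abundant enough in the relevant direction for this to be possible while keeping essentially the same horizontal crossings as the original geodesic, so that $S_H$ stays at $f_H(x,y)n+o(n)$.

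The step I expect to be the main obstacle is carrying out this rerouting rigorously, and the tension is structural: optimizing a path's horizontal cost and making its vertical cost small pull in opposite directions, and the upward steps and horizontal crossings of a monotone path are conjugate data that cannot be prescribed independently — one cannot simply graft the $F_H$-optimal crossing heights onto the web-optimal upward steps. Making it work requires perturbing the crossing heights themselves (shifting each upward step to the nearest column having a $B=1$ above it) and then controlling (i) that the induced change in horizontal cost is $o(n)$ — each shift moves a step $O(1)$ columns, with a net change that is mean-zero up to $O(1)$ fluctuations, so the $\Theta(n)$ shifts contribute $o(n)$ after summing, although the $F_H$-optimality of the unperturbed crossings interacts delicately with the sign of these changes; (ii) that the perturbations do not compound — the monotonicity of the conjugate crossing columns must survive, and the cumulative rightward drift must not push the path past column $\lfloor nx\rfloor$, which is exactly where the slack provided by $f_V(x,y)=0$ is consumed; and (iii) the degenerate regimes ($x/y$ small, or $\xi,\eta$ with atoms at $0$), handled by permitting $o(n)$ genuinely costly upward steps rather than demanding none. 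A clean way to package (i)--(iii) would be to isolate a purely deterministic lemma about routing through the web and then invoke subadditivity and concentration for $F_H$ to identify the limiting constant as $f_H(x,y)$, rather than tracking a specific $F_H$-geodesic.
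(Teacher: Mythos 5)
Your reduction is sound as far as it goes: the observation that $F(m,n)\ge F_H(m,n)+F_V(m,n)$ (hence $f\ge f_H+f_V$) is correct, the vanishing criterion for the SJ limit shape (paper's Lemma~\ref{le: positive limit shape}) and the inequality $q_H+q_V\ge 1$ do give $\{f_V>0\}\subseteq\{f_H=0\}$, and the symmetry correctly reduces the theorem to showing $f\le f_H$ on the region where $f_V=0$. The paper reaches the same reduction, though more directly, via $f\ge\max(f_H,f_V)$ and then convexity to cover the possible strip where both vanish.

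The gap is the one you flag yourself: the rerouting step is never actually carried out, and I don't think it can be made to work along the lines you sketch. You frame it as ``take an $F_H$-geodesic and nudge its up-steps onto $B=1$ vertices,'' but the difficulties you enumerate in (i)--(iii) are genuine and mutually reinforcing, not bookkeeping: the $F_H$-geodesic is selected with no regard to the $\eta$'s, the crossing heights and the up-step columns are conjugate data, each local shift changes both the horizontal crossing weights and the downstream geometry, and the claim that the cumulative effect on $S_H$ is $o(n)$ has no supporting estimate — ``mean-zero up to $O(1)$ fluctuations'' is not justified because the shifts are not independent of the weights they move onto, and $F_H$-optimality of the unperturbed crossings cuts against you, not for you. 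What the paper does instead is dispense with rerouting entirely. Its Lemma~\ref{le: boundary condition lem} is a \emph{deterministic, exact} identity: if the vertical weights on the $y$-axis are made non-positive (in particular zero) while all other weights stay non-negative, then $F=F_H$ identically, which means an $F_H$-geodesic already avoids every non-zero vertical edge off the $y$-axis. The proof is an induction on the non-negativity of the increments $X_{ij}, Y_{ij}$, showing they satisfy the same recursion as $X^H_{ij}, Y^H_{ij}$ precisely because at each vertex one of $\om_{ij},\tom_{ij}$ vanishes. That identity turns the whole problem into controlling the sum of $\eta_{0j}$ up to the topmost departure point $\Dc(nx,ny)$ from the $y$-axis, which is then handled by strict monotonicity of $f_H$ in $y$ (Lemma~\ref{le: strictly decreasing limit shape}). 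Your ``clean way to package (i)--(iii) would be to isolate a purely deterministic lemma about routing through the web'' is the right instinct, but the lemma you'd need is not a perturbation statement about paths — it is this structural identity, and without it the argument does not close.

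Two smaller points: your vanishing criterion as stated (``vanishes \emph{exactly} on $\{x\le\frac{q}{1-q}y\}$'') asserts the strict-positivity direction without proof — the greedy staircase only gives the vanishing half; the paper proves positivity by coarsening the weights to a Bernoulli comparison. And you should be a little careful with $q_H,q_V$ versus $p,1-p$; the paper's argument genuinely needs the case analysis where $\PP(\xi=0)>0$ or $\PP(\eta=0)>0$, which creates a strip where both $f_H$ and $f_V$ vanish and the conclusion is obtained from convexity of $f$ rather than from either of the one-sided comparisons.
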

As we will see later, $f_H(x,y)=0$ for all $x \le (1-p)y/p$ and $f_V(x,y)=0$ for all $x \ge (1-p)y/p$, so Theorem \ref{th: LS thm} can be rewritten as
\[f(x,y)=
\begin{cases}
f_H(x,y) & \quad \text{if } x \ge \frac{1-p}{p}y\\
f_V(x,y) & \quad \text{if } x < \frac{1-p}{p}y.
\end{cases}
\]
Since the scaled first-passage functions converge to their limit shape, it follows that
\begin{equation}\label{eq: error term}
F(\lfloor nx \rfloor,\lfloor ny \rfloor)=F_H(\lfloor nx \rfloor,\lfloor ny \rfloor)+o(n)
\end{equation}
for any $x > (1-p)y/p$, and a similar asymptotic holds with $F_V$ when $x < (1-p)y/p$. Thus the plane is divided into two regions, one where essentially only the horizontal weights matter, and one where only the vertical weights matter. Numerical simulations seem to show that the error term is actually of much smaller order than $o(n)$, and that the equality $\max(f_H(x,y),f_V(x,y))=f(x,y)$ almost holds even in the prelimit.

Next we consider the special case where at least one of the $\xi_{ij}$'s or $\eta_{ij}$'s are Bernoulli distributed (without loss of generality, we can assume it is the $\xi_{ij}$'s). In this situation, we can show that the error in \eqref{eq: error term} is in fact $o(n^{1/3})$. As a consequence, we obtain the following limiting fluctuations result.
\begin{theorem}\label{th: fluctuations thm}
Suppose that $B_{ij} \xi_{ij}$ follow a Bernoulli($p$) distribution. Assume $0<p<1$, and let $x,y$ be positive and satisfying $x>(1-p)y/p$. Define 
\[\chi(x,y)=\left[\frac{p(1-p)}{xy}(\sqrt{px}-\sqrt{(1-p)y})^2(\sqrt{(1-p)x}+\sqrt{py})^2\right]^{1/3}.\]
Then
\begin{equation}\label{eq: fluctuations limit}
\frac{F(nx,ny)-nf(x,y)}{\chi(x,y)n^{1/3}} \Rightarrow TW_{GUE}
\end{equation}
Here $TW_ {GUE}$ is the Tracy--Widom GUE distribution.
\end{theorem}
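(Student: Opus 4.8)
The plan is to derive the theorem from the exact solvability of the Sepp\"al\"ainen--Johansson model, together with a comparison estimate showing that on the $n^{1/3}$ scale the vertical weights $\eta_{ij}$ are invisible.

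\emph{Step 1 (reduction to $\xi\equiv 1$).} Since $B_{ij}$ is Bernoulli($p$) and independent of $\xi_{ij}$, the hypothesis that $B_{ij}\xi_{ij}$ is itself Bernoulli($p$) forces $\xi_{ij}=1$ on $\{B_{ij}=1\}$, and as $\xi_{ij}$ is irrelevant when $B_{ij}=0$ we may assume $\xi_{ij}\equiv 1$. Then horizontal edges carry weight $B_{ij}\in\{0,1\}$ and vertical edges carry weight $(1-B_{ij})\eta_{ij}\ge 0$, so $F_H$ is exactly the first-passage value of the Bernoulli($p$) SJ model with limit shape $f_H$ as in \eqref{eq: Bernoulli formula}; by Theorem~\ref{th: LS thm}, $f(x,y)=f_H(x,y)$ for $x>(1-p)y/p$.

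\emph{Step 2 (fluctuations of $F_H$).} For the Bernoulli SJ model the first-passage value is exactly solvable: through the standard complementation $B_{ij}\mapsto 1-B_{ij}$ it reduces to (an affine function of) the last-passage value of a Bernoulli SJ model, whose law is that of an edge point of a Krawtchouk ensemble \cite{johan01} and whose rescaled fluctuation field converges to the Airy$_2$ process \cite{dauv-nica-vir23}. Consequently, for $x>(1-p)y/p$ one has $\big(F_H(nx,ny)-nf_H(x,y)\big)/\big(\sigma(x,y)n^{1/3}\big)\Rightarrow TW_{GUE}$ for an explicit constant $\sigma(x,y)>0$, and it remains to verify $\sigma=\chi$. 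This is a routine computation: passing from the axis-aligned normalization of the Krawtchouk asymptotics to the ray through $(x,y)$ --- equivalently, evaluating the KPZ-scaling-theory formula for the one-point fluctuation constant on the strictly convex limit shape $f_H(x,y)=(\sqrt{px}-\sqrt{(1-p)y})^2$ --- yields exactly $\chi(x,y)$, which is positive on the relevant region.

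\emph{Step 3 (the comparison estimate --- the main obstacle).} The crux is to show that
\[
F(nx,ny)-F_H(nx,ny)=o(n^{1/3})\qquad\text{in probability};
\]
granting this, $\big(F(nx,ny)-nf(x,y)\big)/\big(\chi n^{1/3}\big)$ differs by $o_\PP(1)$ from the same expression for $F_H$ (using $f=f_H$), so Step~2 and Slutsky's lemma conclude. Since $F\ge F_H$ is trivial, only the upper bound matters, and the natural strategy is to deform an $F_H$-geodesic $\gamma$ into a competitor for $F$ by making every up-step land on a vertex with $B_{ij}=1$, hence of zero vertical weight: whenever $\gamma$ would step up into a vertex with $B_{ij}=0$, one walks right along the current row until the entry in the next row is a $1$ and steps up there. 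Shifting an up-step by one column changes the horizontal weight by at most $1$, with mean zero, so a single surgery costs $O_\PP(1)$ and, heuristically, the surgeries should not build up cost at linear rate. Making this rigorous is delicate: all the surgeries must be performed simultaneously while preserving the up-right ordering of the shifted up-steps, and their cumulative extra weight must be shown to be $o(n^{1/3})$ rather than the naive $O(\sqrt n)$ one would get by treating them as independent mean-zero increments. I would attack this by combining the flexibility in the choice of $F_H$-geodesic, the exponential concentration bounds for $F_H$ available from \cite{johan01}, and a transversal-fluctuation estimate confining $\gamma$ (and hence the surgeries) to a thin strip, so that the accumulated perturbation is controlled on the $n^{1/3}$ scale. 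That the constant $\chi$ does not depend on the distribution of the $\eta_{ij}$'s is precisely the heuristic reason this comparison should succeed.
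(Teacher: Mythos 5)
Your Step 1 and Step 2 are essentially the paper's starting point (up to the fact that the paper cites the \emph{functional} Airy$_2$ convergence of \cite{dauv-nica-vir23} rather than just the one-point Tracy--Widom marginal, and this turns out to matter). The real issue is Step 3, which you yourself flag as ``the crux'': your geodesic-surgery sketch does not close the gap, and the paper proceeds in a genuinely different way.

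The paper's comparison rests on a \emph{deterministic} identity (Lemma~\ref{le: boundary condition lem}): if one replaces the vertical weights on the $y$-axis by non-positive numbers, the first-passage value $F$ of the generalized model coincides exactly with the horizontal-only value $F_H$. Taking the $y$-axis weights to be zero, this shows that there is always an $F_H$-geodesic that uses no vertical edge of positive weight except along the $y$-axis, which gives the sharp upper bound
\[
F(m,n)\le F_H(m,n)+\sum_{j=1}^{\Dc(m,n)}\eta_{0j},
\]
where $\Dc(m,n)$ is the top-most departure point of an $F_H$-geodesic from the $y$-axis. Crucially $\Dc$ is measurable with respect to the horizontal weights alone and is independent of the $\eta$'s, so by the law of large numbers it suffices to show $\Dc(nx,ny)=o_\PP(n^{1/3})$. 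This is then done by passing to the equidistributed bottom-most entry point $\Ec(nx,ny)$ and using the \emph{functional} Airy line ensemble convergence at two nearby scaled arguments (Skorokhod coupling, continuity of $\Ac_1$) to show that $F_H(nx,ny)\neq F_H(nx,ny-\ep n^{1/3})$ eventually, since the macroscopic slope $x\rho-f>0$. Note that this step needs the process-level statement of Theorem~\ref{th: dauv-nica-vir}; the one-point $TW_{GUE}$ convergence you invoke in Step 2 would not suffice.

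By contrast, your surgery argument has no obvious route to a bound of $o(n^{1/3})$: there are $\Theta(n)$ up-steps landing on $B_{ij}=0$ vertices, and rearranging them changes the path (and hence which horizontal weights are collected) in a way that is not simply a sum of mean-zero increments; the naive cost budget is $\Theta(\sqrt n)$ at best, and controlling the cancellation requires precisely the kind of global structural fact that the paper's Lemma~\ref{le: boundary condition lem} supplies. You have the right target estimate and correctly observe that only the upper bound is nontrivial, but the combinatorial/probabilistic argument you sketch is not a proof, and I do not see how to complete it without essentially rediscovering the boundary-condition identity. I'd suggest looking for an exact pathwise reason why the vertical weights off the $y$-axis are irrelevant to an optimizer, rather than trying to beat them down by perturbation.
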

Theorem \ref{th: fluctuations thm} is known for the SJ model; this was proven by Dauvergne, Nica and Vir\'ag in \cite{dauv-nica-vir23}. In fact, they prove a functional version of this result; we state it here in our language as it will be of use in the proof of Theorem \ref{th: fluctuations thm}.
\begin{theorem}\cite[Corollary 6.11]{dauv-nica-vir23}\label{th: dauv-nica-vir}
Consider the Sepp\"al\"ainen--Johansson model with Bernoulli($p$) weights with $0<p<1$, and let $x,y$ be positive and satisfying $x>(1-p)y/p$. Define 
\[\tau(x,y)=2\left[\frac{x^2}{y\sqrt{p(1-p)}}(\sqrt{px}-\sqrt{(1-p)y})(\sqrt{(1-p)x}+\sqrt{py})\right]^{1/3}\]
\[\chi(x,y)=\left[\frac{p(1-p)}{xy}(\sqrt{px}-\sqrt{(1-p)y})^2(\sqrt{(1-p)x}+\sqrt{py})^2\right]^{1/3}\]
\[\rho(x,y)=p-\sqrt{\frac{p(1-p)y}{x}}.\]
Then
\begin{equation}\label{eq: fluctuations limit SJ}
\frac{F_H(nx+\tau(x,y)n^{2/3}t,ny)-nf_H(x,y)-\tau(x,y)n^{2/3}t \rho(x,y)}{\chi(x,y)n^{1/3}} \Rightarrow \Ac_1(t)
\end{equation}
with respect to uniform convergence on compact sets. Here $\Ac_1(t)$ is the top line of the Airy line ensemble. 
\end{theorem}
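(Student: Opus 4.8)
The plan is to recover Theorem~\ref{th: dauv-nica-vir} as a specialization of the universality theorem for Gibbsian line ensembles of \cite{dauv-nica-vir23}: identify the relevant line ensemble attached to the Bernoulli SJ model, check that it satisfies the hypotheses of that theorem, and then read the constants $\tau,\chi,\rho$ off the explicit limit shape \eqref{eq: Bernoulli formula}. The starting point is a combinatorial reduction. An up-right path from $(0,0)$ to $(m,n)$ crosses the column boundary $i-1\to i$ at a non-decreasing sequence of heights $0\le j(1)\le\cdots\le j(m)\le n$ and there collects exactly the horizontal weights $B_{i,j(i)}$, so
\[
m-F_H(m,n)=\max\Bigl\{\sum_{i=1}^{m}\bigl(1-B_{i,j(i)}\bigr):0\le j(1)\le\cdots\le j(m)\le n\Bigr\},
\]
which is the last-passage value of the corner-growth model with i.i.d.\ Bernoulli($1-p$) weights. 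A standard non-intersecting-path representation (via RSK, or the Lindström--Gessel--Viennot lemma) then realizes the whole family $\{m'-F_H(m',n):0\le m'\le m\}$, up to a deterministic reparametrization, as the top curve of a random ensemble $\mathcal{L}^{(n)}$ of ordered lattice paths --- the Krawtchouk ensemble --- which is determinantal with correlation kernel built from Krawtchouk polynomials, and which, being a system of non-intersecting random walk bridges, enjoys the random-walk Gibbs (resampling) property required in \cite{dauv-nica-vir23}.

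Two analytic inputs feed the universality machine. First, one-point convergence: Johansson's steepest-descent analysis \cite{johan01} of the Krawtchouk kernel shows that, in the region $x>(1-p)y/p$ where the limiting density lies strictly between $0$ and $1$, the centred and rescaled endpoint $\chi(x,y)^{-1}n^{-1/3}\bigl(F_H(nx,ny)-nf_H(x,y)\bigr)$ converges to the $t=0$ marginal $\Ac_1(0)$ of the Airy line ensemble, with $f_H$ given by \eqref{eq: Bernoulli formula} and $\chi$ governed by the curvature of $f_H$ at $(x,y)$. Second, tightness and Gibbs regularity: the random-walk Gibbs property together with this one-point tail estimate yields the modulus-of-continuity and no-large-jump bounds that constitute the hypotheses of the universality theorem. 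Feeding $\mathcal{L}^{(n)}$, reparametrized around $(nx,ny)$ with horizontal scale of order $n^{2/3}$ and vertical scale of order $n^{1/3}$, into that theorem gives convergence of the rescaled top curve to $\Ac_1$, uniformly on compact sets.

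It remains to pin down the constants. The macroscopic centring is $nf_H(x,y)$ by the law of large numbers; the linear tilt $\tau(x,y)n^{2/3}t\,\rho(x,y)$ in \eqref{eq: fluctuations limit SJ} is forced by a Taylor expansion of $f_H$ in the horizontal direction at $(x,y)$, which gives $\rho(x,y)=\partial_x f_H(x,y)=p-\sqrt{p(1-p)y/x}$ directly from \eqref{eq: Bernoulli formula}, matching the stated formula; and the horizontal decorrelation scale $\tau(x,y)$ and the fluctuation scale $\chi(x,y)$ are the standard KPZ scaling constants, explicit functions of $\partial_{xx}f_H(x,y)$ (chosen, in particular, so that the quadratic part of the Taylor expansion assembles into the parabola carried by $\Ac_1$), and differentiating \eqref{eq: Bernoulli formula} reproduces the stated expressions. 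I expect the tightness and Gibbs-regularity step to be the real obstacle: establishing, for this lattice Bernoulli ensemble, the precise a priori estimates needed to invoke the universality theorem of \cite{dauv-nica-vir23} is where essentially all of the difficulty lies, whereas the one-point asymptotics and the constant computation are routine once Johansson's kernel analysis and the formula \eqref{eq: Bernoulli formula} are available.
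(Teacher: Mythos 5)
Theorem~\ref{th: dauv-nica-vir} is not proved in this paper; it is quoted from \cite[Corollary~6.11]{dauv-nica-vir23} and then used as a black box in the proof of Theorem~\ref{th: fluctuations thm}. There is therefore no internal proof to compare against, and your write-up should be read as a summary of the argument in that external reference rather than as a competing derivation.

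Read that way, the outline is broadly faithful to how such results are established. The elementary duality
\[
m-F_H(m,n)=\max\Bigl\{\sum_{i=1}^{m}\bigl(1-B_{i,j(i)}\bigr):0\le j(1)\le\cdots\le j(m)\le n\Bigr\}
\]
does convert Bernoulli first-passage into a Bernoulli last-passage problem, Johansson's Krawtchouk-ensemble identification of the latter is the correct determinantal input, and the uniform-on-compacts convergence to $\Ac_1$ in \cite{dauv-nica-vir23} is indeed obtained by combining one-point Airy asymptotics with a random-walk Gibbs property of the associated non-intersecting ensemble; the check $\rho(x,y)=\partial_x f_H(x,y)$ from \eqref{eq: Bernoulli formula} is also correct. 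Two cautions. First, the model your duality produces is the \emph{last-passage SJ model} --- one weight per column along a weakly increasing height sequence --- which is not the corner-growth model (that sums every weight along an up-right lattice path); the Krawtchouk ensemble governs the former, and conflating the two obscures which non-intersecting-path representation is actually in play. Second, and as you yourself flag, the tightness and Gibbs-regularity estimates needed to invoke the universality theorem are the entire mathematical content of the cited result, and they are simply deferred here. As it stands this is a plausible road map, not a proof --- which is acceptable only because the present paper treats the statement as an external citation.
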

The \emph{Airy line ensemble} is a collection of continuous stochastic processes $\Ac_1, \Ac_2, \dots$ with the property that 
\begin{equation}\label{eq: airy line ensemble}
\Ac_1(t) \ge \Ac_2(t) \ge \dots
\end{equation}
for all $t$. Pr\"afer and Spohn first constructed a process with the same finite distributions in \cite{pra-spo02} as the scaling limit of the polynuclear growth model, however it was only shown in \cite{cor-ham14} that there is a version of this process with continuous sample paths that satisfies \eqref{eq: airy line ensemble}. That is, the processes $\Ac_1, \Ac_2, \dots$ can be coupled together in such a way that they are almost surely non-intersecting. The top line $\Ac_1$ is usually called the Airy$_2$ process, and is the scaling limit of the largest eigenvalue in the Dyson Brownian motion; see \cite{johan03}. In particular, $\Ac_1(0)$ follows the Tracy--Widom GUE distribution. For our purposes, this last fact and the continuity of the paths of $\Ac_1$ are the only properties of the Airy$_2$ process that we will need.

Setting $t=0$ in \eqref{eq: fluctuations limit SJ} shows that the fluctuations of the SJ first-passage value converge in distribution to the Tracy--Widom GUE distribution. Our contribution is to extend this result to the generalized SJ model. The scaling exponents and coefficients in \eqref{eq: fluctuations limit SJ} are exactly the same in both models, as well as the limiting distribution. This further reinforces this behaviour of $F$ being almost completely determined by only one set of weights on either side of the critical line.

It is believed that \eqref{eq: fluctuations limit SJ} should be universal, in the sense that the scaling exponents and limiting distribution should be the same for any choice of distribution of the weights, and thus the SJ model should be in the Kardar--Parisi--Zhang (KPZ) universality class. As the proof of Theorem \ref{th: fluctuations thm} will show, for any set of weights for which \eqref{eq: fluctuations limit SJ} holds for the SJ model, the convergence of the marginal at 0 also holds in the generalized case.

\section{Proof of Theorem \ref{th: LS thm}}

Throughout the next two sections, we will usually simplify notation by omitting the floor function when evaluating $F$ or $F_H$ at non-integer points. It is then understood that if $a$ and $b$ are not integers, then $F(a,b)$ is defined to be $F(\lfloor a \rfloor, \lfloor b \rfloor)$. The main ingredient which goes in the proof of Theorem \ref{th: LS thm} is the following curious identity.

 \begin{lemma}\label{le: boundary condition lem}
Let $B_{ij} \in \{0,1\}$, and let $\xi_{ij}, \eta_{ij}$ be collections of real numbers. Let $F(m,n)$ be the first passage value from $(0,0)$ to $(m,n)$ with weights $\om_{ij}=B_{ij}\xi_{ij}$ on horizontal edges $(i-1,j) \to (i,j)$ and weights $\tom_{ij}=(1-B_{ij})\eta_{ij}$ on vertical edges $(i,j-1) \to (i,j)$, and let $F_H(m,n)$ be the first passage value from $(0,0)$ to $(m,n)$ where the weights on the horizontal edges are $\om_{ij}$ and the weights on the vertical edges are 0 except for the weights on the $y$-axis which are $\tom_{0j}$. Suppose that
\[\xi_{ij} \ge 0 \quad \text{for all }i,j \ge 0,\]
\[\eta_{ij} \ge 0 \quad \text{for all } i \ge 1, j \ge 0,\]
\[\eta_{0j} \le 0 \quad \text{for all } j \ge 0,\]
that is the $\xi_{ij}$'s and $\eta_{ij}$'s are all non-negative except for the $\eta_{ij}$'s lying on the $y$-axis which are all non-positive. Then
\[F(m,n)=F_H(m,n)\]
for all $m,n$.
 \end{lemma}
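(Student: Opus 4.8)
The plan is to prove the identity by dynamic programming, using that $F$ and $F_H$ obey the same recursion off the $y$-axis and agree on the coordinate axes. First I would record the structure: both values vanish at the origin and satisfy, for $m,n\ge 1$,
\[
F(m,n)=\min\bigl(F(m-1,n)+\om_{mn},\, F(m,n-1)+\tom_{mn}\bigr),
\]
\[
F_H(m,n)=\min\bigl(F_H(m-1,n)+\om_{mn},\, F_H(m,n-1)\bigr),
\]
while on the axes $F(m,0)=F_H(m,0)=\sum_{i=1}^m\om_{i0}$ and $F(0,n)=F_H(0,n)=\sum_{j=1}^n\tom_{0j}$. The only discrepancy in the bulk is that the vertical edge into $(m,n)$ carries weight $\tom_{mn}=(1-B_{mn})\eta_{mn}\ge 0$ for $F$ but weight $0$ for $F_H$ (as $m\ge1$).

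Second, I would establish two monotonicity properties of $F_H$. \emph{(i)} For each fixed $m\ge0$, $n\mapsto F_H(m,n)$ is non-increasing: on the $y$-axis because $\tom_{0j}\le0$, and for $m\ge1$ directly from the recursion since the vertical step into column $m$ is free. \emph{(ii)} For $m\ge1$ and all $n'\le n$ one has $F_H(m-1,n)\le F_H(m,n')$. This is the heart of the argument; it admits a short geometric proof — a geodesic realizing $F_H(m,n')$ crosses from $x$-coordinate $m-1$ to $m$ at some height $h\le n'$, so its cost is at least the cost of its prefix to $(m-1,h)$, namely $\ge F_H(m-1,h)\ge F_H(m-1,n)$ by (i), the remaining portion (one crossing edge of nonnegative weight, then free vertical steps in column $m$) contributing $\ge0$. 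It also follows purely from (i) and the recursion by induction on $n'$: for $n'=0$, $F_H(m,0)=\sum_{i=1}^m\om_{i0}\ge\sum_{i=1}^{m-1}\om_{i0}=F_H(m-1,0)\ge F_H(m-1,n)$; for the step, $F_H(m,n')\ge\min\bigl(F_H(m-1,n'),F_H(m,n'-1)\bigr)\ge F_H(m-1,n)$ using (i) on the first term and the inductive hypothesis on the second.

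Third, I would run the main double induction, on $m$ and then on $n$, to prove $F(m,n)=F_H(m,n)$. The axes are the base cases. For $m,n\ge1$, substitute $F(m-1,n)=F_H(m-1,n)$ and $F(m,n-1)=F_H(m,n-1)$ into the recursion for $F$. If $B_{mn}=1$ then $\tom_{mn}=0$ and the two recursions agree term by term. If $B_{mn}=0$ then $\om_{mn}=0$ and $\tom_{mn}=\eta_{mn}\ge0$, so $F(m,n)=\min\bigl(F_H(m-1,n),\,F_H(m,n-1)+\eta_{mn}\bigr)$ whereas $F_H(m,n)=\min\bigl(F_H(m-1,n),\,F_H(m,n-1)\bigr)$; by (ii) with $n'=n-1$, $F_H(m-1,n)\le F_H(m,n-1)\le F_H(m,n-1)+\eta_{mn}$, so both minima equal $F_H(m-1,n)$, giving $F(m,n)=F_H(m,n)$.

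The main obstacle is precisely property (ii). Without it the $B_{mn}=0$ case stalls: a priori the nonnegative surcharge $\eta_{mn}$ on the vertical edge into $(m,n)$ could push $F(m,n)$ strictly above $F_H(m,n)$. Property (ii) says that in the $F_H$-model it is never cheaper to reach $(m,n)$ through that vertical edge than through the horizontal edge into $(m,n)$ — which is free exactly when $B_{mn}=0$ — so the vertical edge is never used and the surcharge is immaterial. The remaining steps are routine manipulation of the two recursions.
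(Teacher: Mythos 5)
Your proof is correct, and it takes a genuinely different route from the paper's. The paper works at the level of increments: it sets $X_{ij}=F(i,j)-F(i-1,j)$, $Y_{ij}=F(i,j-1)-F(i,j)$, proves by double induction that these are non-negative, and under that hypothesis shows that the recursions for the $F$-increments reduce to the recursions for the $F_H$-increments, so the two arrays coincide. Your argument instead proves the two monotonicity facts about $F_H$ \emph{alone}, in isolation, and then runs a single direct double induction on $F(m,n)=F_H(m,n)$; the fact that the vertical surcharge $\tom_{mn}=\eta_{mn}\ge 0$ can be ignored in the $B_{mn}=0$ case is precisely your property~(ii), $F_H(m-1,n)\le F_H(m,n-1)$, which in the paper's notation is the statement $X^H_{m,n-1}+Y^H_{m-1,n}\ge 0$. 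Your version is arguably cleaner: all the monotonicity is established about $F_H$, the simpler model in which all edges off the $y$-axis are non-negative (which makes property~(ii) easy, either geometrically or by the one-variable induction you give), whereas the paper must establish non-negativity of the increments of $F$ — the more complicated model — in tandem with deriving the simplified increment recursions, which is a more intertwined double induction. The paper's approach has the advantage of identifying all increments $X_{ij}=X^H_{ij}$, $Y_{ij}=Y^H_{ij}$, a slightly stronger structural statement than the equality of first-passage values alone, though that extra information is not used later. Both proofs are deterministic and require no additional hypotheses beyond those stated.
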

 
 Thus if one replaces the weights on the $y$-axis with non-positive weights, then the first-passage value corresponds exactly to the first-passage value on horizontal edges. That is, we can completely ignore any vertical edges not on the $y$-axis; given a geodesic from $(0,0)$ to $(m,n)$, it cannot pass through a vertical edge of non-zero weight except for edges on the $y$-axis. Lemma \ref{le: boundary condition lem} is deterministic and holds for arbitrary collections of numbers $\xi_{ij}$, $\eta_{ij}$ and $B_{ij}$ satisfying the conditions in the lemma. A similar result holds with weights on the $x$-axis changed to being non-positive and first-passage on vertical edges.

\begin{proof}
Let $X_{ij}$, $Y_{ij}$ be the horizontal and vertical increments for $F$:
\begin{align*}
X_{ij}&=F(i,j)-F(i-1,j)\\
Y_{ij}&=F(i,j-1)-F(i,j)
\end{align*}
and define $X^H_{ij}$ and $Y^H_{ij}$ similarly for $F_H$. If we're given all the increments of a model, we can deduce what the first passage values are by just adding/subtracting the increments:
\[F(m,n)=\sum_{i=1}^m X_{i0}-\sum_{j=1}^n Y_{mj}\]
(and similarly for $F_H$). It is therefore enough to show that $X_{ij}=X^H_{ij}$ and $Y_{ij}=Y^H_{ij}$ for all $i,j$. Since any path from $(0,0)$ to $(i,j)$ must pass through exactly one of the vertices $(i-1,j)$ or $(i,j-1)$, it is easy to see that $F$ and $F_H$ satisfy the recursions
\begin{equation}\label{eq: fp recursion}
\begin{aligned}
F(i,j)&=\min(F(i-1,j)+\om_{ij},F(i,j-1)+\tom_{ij})\\
F_H(i,j)&=\min(F_H(i-1,j)+\om_{ij},F_H(i,j-1)).
\end{aligned}
\end{equation}
Using \eqref{eq: fp recursion}, we obtain recursions for $X_{ij}$ and $Y_{ij}$:
\begin{align*}
X_{ij}&=\min(F(i-1,j)+\om_{ij},F(i,j-1)+\tom_{ij})-F(i-1,j)\\
&=\min(\om_{ij},X_{i,j-1}+Y_{i-1,j}+\tom_{ij})
\end{align*}
and
\begin{align*}
Y_{ij}&=F(i,j-1)-\min(F(i-1,j)+\om_{ij},F(i,j-1)+\tom_{ij})\\
&=\max(X_{i,j-1}+Y_{i-1,j}-\om_{ij},-\tom_{ij}).
\end{align*}
Now, suppose that $X_{i,j-1}$ and $Y_{i-1,j}$ are both non-negative. By definition of the model, at least one of $\om_{ij}$ or $\tom_{ij}$ must be 0. If $\tom_{ij}=0$, then $Y_{ij}=\max(X_{i,j-1}+Y_{i-1,j}-\om_{ij},0)$. If $\om_{ij}=0$, then because $X_{i,j-1}+Y_{i-1,j}$ and $\tom_{ij}$ are non-negative,
\begin{align*}
Y_{ij}&=\max(X_{i,j-1}+Y_{i-1,j},-\tom_{ij})=X_{i,j-1}+Y_{i-1,j}\\
&=\max(X_{i,j-1}+Y_{i-1,j}-\om_{ij},0).
\end{align*}
Likewise, if both $X_{i,j-1}$ and $Y_{i-1,j}$ are non-negative and $\tom_{ij}=0$, then 
\[X_{ij}=\min(\om_{ij}, X_{i,j-1}+Y_{i-1,j}),\]
and if instead $\om_{ij}=0$, then
\[X_{ij}=\min(0, X_{i,j-1}+Y_{i-1,j}+\tom_{ij})=0=\min(\om_{ij}, X_{i,j-1}+Y_{i-1,j}).\]
Note also from these recursions that $X_{ij}$ and $Y_{ij}$ are then both non-negative. Using \eqref{eq: fp recursion} for $F_H$ and the fact that vertical edges have weight 0 in this model, we find 
\[X^H_{ij}=\max(0,X^H_{i,j-1}+Y^H_{i-1,j}-\om_{ij})\]
and
\[Y^H_{ij}=\min(X^H_{i,j-1}+Y^H_{i-1,j},\om_{ij}).\]
So the increments for $F$ and $F_H$ satisfy the exact same recursion provided the increments for $F$ are non-negative. The increments are indeed non-negative; this is clear when $i=0$ or $j=0$ since $\xi_{ij} \ge 0$ and $\eta_{i0} \le 0$ for all $i$ and $j$, and the general case follows by double induction on $(i,j)$. Finally the boundary conditions are the same since the weights on the edges of both axes are the same.
\end{proof}

We apply Lemma \ref{le: boundary condition lem} to the special case where the weights on the $y$-axis are zero. Since the first-passage value is the same as $F_H(m,n)$ when we change all the weights on the $y$-axis to be zero, there must be a geodesic $\pi$ for $F_H(m,n)$ which does not pass through any vertical edge of non-zero weight except possibly on the $y$-axis. We obviously have $S(\pi) \ge F(m,n)$, and the only difference between these two are the extra weights picked up by $\pi$ along the $y$-axis:
\[F(m,n) \le S(\pi)=F_H(m,n)+\sum_{j=1}^Z (1-B_{0j})\eta_{0j}\]
where $Z=\min\{k \ge 0: (1,k) \in \pi\}$ is the position where $\pi$ exits the $y$-axis (when $Z=0$, we interpret the sum as being 0). Since $\pi$ is a geodesic for $F_H(m,n)$, we have $Z \le \Dc(m,n)$, where $\Dc(m,n)$ is the top-most departure point from the $y$-axis that a geodesic for $F_H$ can take:
\[\Dc(m,n)=\max\{k \ge 0: F_H(0,k;m,n)=F_H(m,n)\}.\]
Note that $\Dc(m,n)$ is independent of the $\eta_{ij}$'s since it is defined completely in terms of first-passage percolation with the weights $B_{ij}\xi_{ij}$ on horizontal edges. Thus, along with the inequality $(1-B_{0j}) \le 1$, we have the upper bound
\begin{equation}\label{eq: upper bound for F}
F(m,n) \le F_H(m,n)+\sum_{j=1}^{\Dc(m,n)} \eta_{0j}.
\end{equation}
Together with the obvious lower bound $F(m,n) \ge F_H(m,n)$, Theorem \ref{th: LS thm} will be proved for $y \le (1-p)x/p$ provided that we can show
\begin{equation}\label{eq: sum with random number of terms}
\frac{1}{n} \sum_{j=1}^{\Dc(nx,ny)} \eta_{0j} \to 0
\end{equation}
in probability (this will imply $F(nx,ny)/n \to f_H(x,y)$ in probability, but since we know that $F(nx,ny)/n \to f(x,y)$ almost surely by the subadditive ergodic theorem and almost sure convergence implies convergence in probability, we thus have $F(nx,ny)/n \to f_H(x,y)$ almost surely). As mentioned above, $\Dc(m,n)$ is independent of the $\eta_{ij}$'s, so by the law of large numbers, \eqref{eq: sum with random number of terms} is equivalent to 
\begin{equation}\label{eq: D conv prob}
\frac{\Dc(nx,ny)}{n} \to 0
\end{equation}
in probability. It will be more convenient to work with the bottom-most entry point $\Ec(m,n)$ to the line $x=m$:
\[\Ec(m,n)=\max\{k \ge 0: F_H(m,n-k)=F_H(m,n)\}.\]
We can see that $\Ec(m,n)$ has the same law as $\Dc(m,n)$, since it corresponds exactly to the bottom-most departure point for first-passage percolation with down-left paths from $(m,n)$ to $(0,0)$. So \eqref{eq: D conv prob} is equivalent to showing that
\begin{equation}\label{eq: E conv prob}
\frac{\Ec(nx,ny)}{n} \to 0
\end{equation}
in probability. To get Theorem \ref{th: LS thm} for $x<(1-p)y/p$, one can use the same argument as above but with first-passage percolation on vertical edges instead and by considering the right-most departure from the $x$-axis and left-most entry to the line $y=n$. The proof for this case is exactly the same so we will not write it down.

\begin{lemma}\label{le: positive limit shape}
Let $\om_{ij}$ be i.i.d. and non-negative, and let $f_H$ be the limit shape for the SJ model on horizontal edges with weights $\om_{ij}$. Let $p=\PP(\om_{ij}=0)$. Then $f_H(x,y)>0$ if and only if $x>\frac{p}{1-p}y$.
\end{lemma}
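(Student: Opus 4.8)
The plan is to prove the two directions separately: that $f_H(x,y)=0$ whenever $x\le\frac{p}{1-p}y$, by producing explicit zero-weight paths, and that $f_H(x,y)>0$ whenever $x>\frac{p}{1-p}y$, by comparison with Sepp\"al\"ainen's solvable Bernoulli model. We may assume $p<1$ throughout, since if $p=1$ then $\om_{ij}=0$ almost surely, $f_H\equiv0$, and the claim holds with the convention $\frac{p}{1-p}=+\infty$.

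For the first direction, fix the weight realization and, for each height $j\ge0$, let $g_j$ be the largest column index $i$ for which $(i,j)$ is reachable from $(0,0)$ by an up-right path all of whose horizontal edges carry weight $0$ (recall that vertical edges are free in the $F_H$ model). Using that $(i,j)$ is reachable precisely when either $(i,j-1)$ is reachable or $(i-1,j)$ is reachable and $\om_{ij}=0$, one checks by induction on $j$ that the reachable columns at height $j$ form exactly the interval $\{0,1,\dots,g_j\}$: all of $\{0,\dots,g_{j-1}\}$ stays reachable via free upward steps, and to the right of $g_{j-1}$ the reachable set extends by exactly the length $L_j$ of the initial run of zeros among $\om_{g_{j-1}+1,j},\om_{g_{j-1}+2,j},\dots$, so $g_j=g_{j-1}+L_j$ (with $g_0=L_0$). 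Next, conditionally on the weights in the rows strictly below $j$ the variables entering $L_j$ are fresh i.i.d.\ copies of $\om$, so the $L_j$ are i.i.d.\ with $\PP(L_j=k)=p^k(1-p)$ and $\EE L_j=\frac{p}{1-p}$; the strong law then gives $g_n/n\to\frac{p}{1-p}$ almost surely. Since $F_H(m,n)$ is a minimum of sums of non-negative numbers, $F_H(m,n)=0$ if and only if $m\le g_n$, so for $x<\frac{p}{1-p}y$ one has $F_H(\lfloor nx\rfloor,\lfloor ny\rfloor)=0$ for all large $n$ almost surely, hence $f_H(x,y)=0$; the boundary case $x=\frac{p}{1-p}y$ follows since $f_H$ is convex, hence continuous, and vanishes on $\{x<\frac{p}{1-p}y\}$.

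For the second direction, fix $x>\frac{p}{1-p}y$ and, for $\veps>0$, put $\om'_{ij}=\veps\,\mathbf{1}[\om_{ij}>\veps]$, so that $0\le\om'_{ij}\le\om_{ij}$. Then the first-passage value $F'_H$ of the SJ model on horizontal edges with weights $\om'_{ij}$ satisfies $F'_H\le F_H$, and hence $f'_H\le f_H$ for the limit shapes. But $\om'_{ij}$ equals $\veps$ times an i.i.d.\ Bernoulli($p'$) weight with $p'=\PP(\om_{ij}>\veps)$, so \eqref{eq: Bernoulli formula} (rescaled by $\veps$) gives $f'_H(x,y)>0$ precisely when $x>\frac{1-p'}{p'}y=\frac{\PP(\om_{ij}\le\veps)}{\PP(\om_{ij}>\veps)}y$. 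As $\veps\downarrow0$ we have $\PP(\om_{ij}\le\veps)\downarrow\PP(\om_{ij}=0)=p$, so $\frac{1-p'}{p'}\to\frac{p}{1-p}<x/y$; choosing $\veps$ small enough that $x>\frac{1-p'}{p'}y$ yields $f_H(x,y)\ge f'_H(x,y)>0$.

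I expect the main obstacle to be the bookkeeping in the first direction: proving that the reachable column set at each height is an interval and that the increments $L_j$ are genuinely i.i.d.\ (rather than merely a stationary but dependent sequence), and then not forgetting that the critical line itself is invisible to the law of large numbers and must be handled through continuity of $f_H$. The second direction is comparatively short once one observes that truncation produces an explicitly solvable Bernoulli model whose critical slope $\frac{1-p'}{p'}$ tends to $\frac{p}{1-p}$ as the truncation level tends to $0$.
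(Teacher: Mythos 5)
Your proof is correct, and its combinatorial core coincides with the paper's: both directions hinge on the same two observations, namely the geometric structure of the greedy zero-weight frontier (for vanishing) and domination from below by a coarsened Bernoulli model (for positivity). The positivity direction is essentially the paper's argument: the paper picks $q$ with $x>qy/(1-q)$ and a truncation level $c$ so that $\PP(\om\le c)<q$, replaces $\om$ by $c\mathbf{1}[\om>c]$, and reads off positivity from the explicit Bernoulli formula; your $\om'=\veps\mathbf{1}[\om>\veps]$ with $\veps\downarrow 0$ is the same device, phrased as a limit instead of a single well-chosen threshold. The vanishing direction differs in the probabilistic finish. The paper reads the greedy frontier column-by-column, writing $\PP(F_H(m,n)=0)=\PP(Z_1+\cdots+Z_m\le n)$ with $Z_i\sim\mathrm{Geo}(p)$ i.i.d., then applies an exponential Markov bound with an explicit $\theta$, verifies by differentiating the rate function that the exponent is strictly negative, and invokes Borel--Cantelli. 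You read the same frontier row-by-row, establish that the reachable set at each height is an interval $\{0,\dots,g_n\}$ with $g_n$ a sum of i.i.d.\ geometric increments, and then simply apply the strong law of large numbers to get $g_n/n\to p/(1-p)$ a.s., which gives eventual vanishing directly since $F_H(\lfloor nx\rfloor,\lfloor ny\rfloor)=0$ iff $\lfloor nx\rfloor\le g_{\lfloor ny\rfloor}$. Your route is cleaner in that it avoids the moment-generating-function computation and the rate-function calculus, at the price of being somewhat more careful about the interval structure and the conditional independence behind the i.i.d.\ claim for the $L_j$ (which you correctly flag, and which the paper glosses over symmetrically on the $Z_i$ side); the paper's route is more work but yields an exponential tail bound as a by-product. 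One small omission: you should note, as the paper does, the trivial cases $x=0$ and $p=0$, where the ``$x<\frac{p}{1-p}y$'' half is vacuous or degenerate and one needs the one-line observation $F_H(0,n)=0$.
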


\begin{proof}
First assume that $0<x<py/(1-p)$ (the case where $x=0$ is trivial, since $F_H(0,n)=0$ for all $n$). Note in particular that this implies $p>0$ in that case. In order for $F_H(m,n)$ to be 0, there has to be a path from the origin to $(m,n)$ which only visits edges of weight 0. Every time the path sees a horizontal edge of weight 0, it will take it, otherwise it will keep moving up until it sees an edge of weight 0. The number of up steps it needs to take before it sees such an edge has the geometric distribution on $\{0,1, \dots\}$ with probability of success $p$, and it needs to take $m$ right steps. So
\[\PP(F_H(m,n)=0)=\PP(Z_1+\dots+Z_m \le n)\]
where $Z_1, \dots, Z_m$ are i.i.d. $\text{Geo}(p)$ random variables. Take $\lfloor nx \rfloor$ and $\lfloor ny \rfloor$ instead of $m$ and $n$. Then for $\theta>0$, we have, by Markov's inequality,
\begin{align*}
\PP(F_H(\lfloor nx \rfloor, \lfloor ny \rfloor) \neq 0)&=\PP(Z_1+\dots+Z_{\lfloor nx \rfloor} > \lfloor ny \rfloor)\\
&=\PP(e^{\theta(Z_1+\dots+Z_{\lfloor nx \rfloor})}>e^{\theta \lfloor ny \rfloor}) \le \frac{\EE(e^{\theta Z_1})^{\lfloor nx \rfloor}}{e^{\theta \lfloor ny \rfloor}}\\
&=\exp(\lfloor nx \rfloor \log p-\lfloor nx \rfloor \log(1-(1-p)e^{\theta})-\theta \lfloor ny \rfloor).
\end{align*}
There is some $0<\ep<1$ such that $x=(1-\ep)py/(1-p)$. Take 
\[\theta:=\log\left(\frac{y}{(1-p)(x+y)}\right).\]
By this condition on $x$ and $y$, we have $\theta=-\log(1-\ep p)>0$. Substituting $\theta$ in the above, we then find
\begin{align*}
&\PP(F_H(\lfloor nx \rfloor, \lfloor ny \rfloor) \neq 0)\\
&\le \exp\left(\lfloor nx \rfloor \log p-\lfloor nx \rfloor \log\left(1-\frac{(1-p)}{1-\ep p}\right)+\lfloor ny \rfloor \log(1-\ep p)\right)\\
&=\exp((\lfloor nx \rfloor +\lfloor ny \rfloor )\log(1-\ep p)-\lfloor nx \rfloor \log(1-\ep))\\
&\le \exp((nx+ny-2)\log(1-\ep p)-nx \log(1-\ep))\\
&=\left(\frac{1}{1-\ep p}\right)^2 \exp \left(nx \left[ \frac{1-\ep p}{p(1-\ep)}\log(1-\ep p)-\log(1-\ep)\right] \right).
\end{align*}
Let $g(\ep)$ be the expression in square brackets above. Then
\[g'(\ep)=\frac{(1-p)\log(1-\ep p)}{p(1-\ep)^2}<0\]
so $g$ is strictly decreasing. Since $g(0)=0$, it follows that $g(\ep)<0$ for every $0<\ep<1$, and therefore the above probabilities are summable in $n$. By the Borel--Cantelli lemma, it follows that $F_H(\lfloor nx \rfloor, \lfloor ny \rfloor)$ is 0 for all but finitely many $n$ almost surely, and thus the limit shape must satisfy $f_H(x,y)=0$. This proves that $f_H(x,y)=0$ for all $x<py/(1-p)$, and by continuity, we obtain this for $x=py/(1-p)$ as well.

Finally assume $x>py/(1-p)$, and choose $q>p$ such that $x>qy/(1-q)$. Since
\[1-p=\PP(\om_{ij}>0)=\lim_{s \downarrow 0} \PP(\om_{ij}>s),\]
there is some $c>0$ such that $\PP(\om_{ij}>c) > 1-q$, or equivalently that $\PP(\om_{ij} \le c)<q$. Define new weights $\tom_{ij}$ as follows:
\[\tom_{ij}=
\begin{cases}
0 & \quad \text{if } \om_{ij} \le c\\
c& \quad \text{if } \om_{ij}>c.
\end{cases}
\]
Then we have $\om_{ij} \ge \tom_{ij}$ for all $i,j$, and $\tom_{ij}$ is $c$ times a Bernoulli$(1-s)$ for some $0 \le s< q$. With $\widetilde{f}_H$ the limit shape of the $\tom_{ij}$'s (which we can compute explicitly using \eqref{eq: Bernoulli formula}), we then have
\[f_H(x,y) \ge \widetilde{f}_H(x,y)=c(\sqrt{(1-s)x}-\sqrt{sy})^2>0. \qedhere \]
\end{proof}

\begin{lemma}\label{le: strictly decreasing limit shape}
With the hypotheses of the  previous lemma and the assumption that $p>0$, we have for any fixed $x$ that the function $y \mapsto f_H(x,y)$ is strictly decreasing on $[0,(1-p)x/p]$.
\end{lemma}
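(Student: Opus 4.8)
The plan is to obtain strict monotonicity as a soft consequence of the convexity of the limit shape together with Lemma~\ref{le: positive limit shape}, with no further probabilistic input.

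Write $g(y):=f_H(x,y)$ for the fixed $x$. First I would record two elementary properties of $g$. It is \emph{non-increasing}: in the SJ model on horizontal edges all vertical edges have weight $0$, so appending the edge $(m,n)\to(m,n+1)$ to a geodesic for $F_H(m,n)$ yields an up-right path to $(m,n+1)$ of equal weight, hence $F_H(m,n+1)\le F_H(m,n)$; since $\lfloor ny_2\rfloor\ge\lfloor ny_1\rfloor$ for $y_1\le y_2$, dividing by $n$ and letting $n\to\infty$ gives $g(y_2)\le g(y_1)$. It is also \emph{convex}, being the restriction of the convex function $f_H$ to the vertical line through $(x,0)$ (convexity of $f_H$ follows from its homogeneity and triangle inequality exactly as noted in the Introduction for $f$).

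Now suppose, for contradiction, that $g$ is not strictly decreasing on $[0,(1-p)x/p]$, so that $g(y_1)=g(y_2)=:v$ for some $0\le y_1<y_2\le (1-p)x/p$. Since $y_1<(1-p)x/p$, i.e.\ $x>\tfrac{p}{1-p}y_1$, Lemma~\ref{le: positive limit shape} gives $v=g(y_1)>0$. The contradiction comes from the elementary fact that a convex non-increasing function taking the same value $v$ at two points $y_1<y_2$ must equal $v$ on all of $[y_1,\infty)$: on $[y_1,y_2]$ convexity gives $g\le v$ and monotonicity gives $g\ge v$; and for $y>y_2$ the convexity inequality applied to the triple $y_1<y_2<y$, using $g(y_1)=g(y_2)=v$, yields $g(y)\ge v$, while monotonicity gives $g(y)\le v$. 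In particular $g\big((1-p)x/p\big)=v>0$. But at $y=(1-p)x/p$ one has $\tfrac{p}{1-p}y=x$, so the condition $x>\tfrac{p}{1-p}y$ fails there and Lemma~\ref{le: positive limit shape} forces $g\big((1-p)x/p\big)=0$ --- a contradiction. Hence $g$ is strictly decreasing on $[0,(1-p)x/p]$.

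I do not expect a genuine obstacle here: this is a two-line convexity argument once Lemma~\ref{le: positive limit shape} is available. The only places deserving care are (i) justifying the monotonicity of $g$ from the edge structure of the model, and (ii) making the convex-analysis step precise, including the boundary case $y_2=(1-p)x/p$ (the case $y_2<(1-p)x/p$ being slightly easier, since then $g$ already vanishes at a point strictly to the right of $y_2$). Equivalently, one can summarize the whole argument as: a convex function on $[0,(1-p)x/p]$ that is positive at $0$, vanishes at $(1-p)x/p$, and is non-increasing cannot be flat on any subinterval, hence is strictly decreasing there.
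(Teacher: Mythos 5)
Your argument is correct and rests on the same two pillars as the paper's: convexity of $y\mapsto f_H(x,y)$ and the sign information from Lemma~\ref{le: positive limit shape} (positive strictly below the critical line, zero at and above it). The paper's own proof is a one-inequality direct version of what you do by contradiction: pick $z>(1-p)x/p$, write $y_2=(1-t)y_1+tz$, and use convexity plus $f_H(x,z)=0$ to get $f_H(x,y_2)\le(1-t)f_H(x,y_1)<f_H(x,y_1)$. That route avoids needing the separate non-increasingness observation that you establish from the edge structure; it falls out automatically because $f_H(x,z)=0$ and $f_H(x,y_1)>0$ already force the inequality to be strict. Your detour through monotonicity is harmless (and your justification of it is fine), but it is extra machinery the direct convexity inequality does not require. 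Both arguments correctly handle the closed right endpoint $(1-p)x/p$, where Lemma~\ref{le: positive limit shape} gives the value $0$.
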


\begin{proof}
Let $0 \le y_1<y_2 < (1-p)x/p$, and pick any $z>(1-p)x/p$. Then there is a $t \in (0,1)$ such that $y_2=(1-t)y_1+tz$. By Lemma \ref{le: positive limit shape}, $f_H(x,y_1)>0$ and $f_H(x,z)=0$. Since $y \mapsto f_H(x,y)$ is convex, it follows that
\begin{align*}
f_H(x,y_2)&=f_H(x,(1-t)y_1+tz) \le (1-t)f_H(x,y_1)+tf_H(x,z)\\
&=(1-t)f_H(x,y_1)<f_H(x,y_1). \qedhere
\end{align*}
\end{proof}

We are now ready to show \eqref{eq: E conv prob} and conclude the proof of Theorem \ref{th: LS thm}. Let $q=\PP(B_{ij}\xi_{ij}=0)$. Then for a point $(x,y)$ satisfying $x>qy/(1-q)$, we have that the function $z \mapsto f_H(x,z)$ is strictly decreasing in a neighbourhood of $y$ by Lemma \ref{le: strictly decreasing limit shape}. So for all $\ep>0$ small enough, $f_H(x,y-\ep)>f_H(x,y)$, and because 
\[\frac{F_H(nx,ny)}{n} \to f_H(x,y), \quad \frac{F_H(nx,n(y-\ep))}{n} \to f_H(x,y-\ep)\]
almost surely, it follows that
\[\PP(\Ec(nx,ny)\ge n\ep)=\PP(F_H(nx,ny)=F_H(nx,n(y-\ep))) \to 0.\]
By the remarks leading up to \eqref{eq: E conv prob}, this shows that $f(x,y)=f_H(x,y)$ for $x>qy/(1-q)$. As explained previously, a symmetric argument using first-passage percolation on vertical edges shows that we also have $f(x,y)=f_V(x,y)$ for $x<(1-r)y/r$ where $r=\PP((1-B_{ij})\eta_{ij}=0)$. By continuity of the limit shape and Lemma \ref{le: positive limit shape}, we also have $f(x,y)=f_H(x,y)=0$ for $x=qy/(1-q)$ and $f(x,y)=f_V(x,y)=0$ for $x=(1-r)y/r$.

In the case where both $\xi_{ij}$ and $\eta_{ij}$ are positive almost surely, we are done (since then we have $q=1-p$ and $r=p$). If not, then we still have to deal with points in between the lines $x=qy/(1-q)$ and $x=(1-r)y/r$. However on those lines, we have $f(x,y)=0$, and $f$ is non-negative and convex. So $f$ must also be zero in between those lines, and therefore equals both $f_H$ and $f_V$ there. This shows that $f(x,y)=\max(f_H(x,y),f_V(x,y))$ in all cases and concludes the proof of Theorem \ref{th: LS thm}!

\section{Proof of Theorem \ref{th: fluctuations thm}}

We will abbreviate things in the statement of Theorem \ref{th: dauv-nica-vir} by writing $\tau=\tau(x,y)$, $\chi=\chi(x,y)$ and $\rho=\rho(x,y)$. Write $\Fc_n(t)$ for the left-hand side of \eqref{eq: fluctuations limit SJ}. Thus we have $\Fc_n(t) \Rightarrow \Ac_1(t)$ with respect to uniform convergence on compact sets.

By the Skorokhod representation theorem \cite[Theorem 6.7]{bil99}, the $\Fc_n$ and $\Ac_1$ can be coupled together on the same probability space such that $\Fc_n \to \Ac_1$ uniformly on compact sets, almost surely. We henceforth work with this particular coupling. Fix $\ep>0$ and define
\[t_n=\frac{\frac{\ep}{y} n^{1/3}x}{\tau(n-\frac{\ep}{y}n^{1/3})^{2/3}}.\]
Then 
\[\Fc_{n-\frac{\ep}{y}n^{1/3}}(t_n) \to \Ac_1(0)\]
(we again use the convention that $\Fc_{k}=\Fc_{\lfloor k \rfloor}$ when $k$ is not an integer). Indeed, let $K$ be a compact subset of $\RR$ which contains all the $t_n$'s. Then 
\[|\Fc_{n-\frac{\ep}{y}n^{1/3}}(t_n)-\Ac_1(0)| \le \sup_{s \in K}|\Fc_{n-\frac{\ep}{y}n^{1/3}}(s)-\Ac_1(s)|+|\Ac_1(t_n)-\Ac_1(0)|.\]
The first term on the right-hand side above converges to 0 since $\Fc_n \to \Ac_1$ uniformly on $K$, and the second term converges to 0 because $\Ac_1$ is continuous. We have
\begin{align*}
\Fc_{n-\frac{\ep}{y}n^{1/3}}(t_n)=\frac{F_H(nx,ny-\ep n^{1/3})-(n-\frac{\ep}{y}n^{1/3})f(x,y)-\frac{\ep}{y}xn^{1/3} \rho}{(n-\frac{\ep}{y}n^{1/3})^{1/3} \chi},
\end{align*}
and so
\begin{equation}\label{eq: F at lower point}
F_H(nx,ny-\ep n^{1/3})-(n-\frac{\ep}{y}n^{1/3})f(x,y)-\frac{\ep}{y}x \rho n^{1/3}=\chi \Ac_1(0) n^{1/3}+o(n^{1/3}).
\end{equation}
Since $\Fc_{n-\frac{\ep}{y}n^{1/3}}(0) \to \Ac_1(0)$, we also have
\begin{equation}\label{eq: F at upper point}
F_H(nx, ny)-nf(x,y)=\chi \Ac_1(t) n^{1/3}+o(n^{1/3}).
\end{equation}
Now subtract \eqref{eq: F at upper point} from \eqref{eq: F at lower point} and divide by $n^{1/3}$. After some rearranging, this yields
\begin{equation}\label{eq: F at points n^1/3 apart}
\frac{F_H(nx,ny-\ep n^{1/3})-F_H(nx,ny)}{n^{1/3}}=\frac{\ep}{y}(x\rho-f(x,y))+o(1).
\end{equation}
Since we are in the Bernoulli case, $f(x,y)$ is given by \eqref{eq: Bernoulli formula}, and so we find
\begin{align*}
x\rho(x,y)-f(x,y)&=px-\sqrt{p(1-p)xy}-(\sqrt{px}-\sqrt{(1-p)y})^2\\
&=\sqrt{p(1-p)xy}-(1-p)y>0
\end{align*}
by our assumption on $x,y$ and $p$. This together with \eqref{eq: F at points n^1/3 apart} implies that for all sufficiently large $n$, we must have
\[F_H(nx,ny) \neq F_H(nx,ny-\ep n^{1/3}).\]
Consequently, the bottom-most entry point $\Ec(nx,ny)$ is at most $\ep n^{1/3}$, and this implies
\[\limsup_{n \to \infty} \frac{\Ec(nx,ny)}{n^{1/3}} \le \ep\]
almost surely. Since $\ep$ was arbitrary, it follows that
\[\frac{\Ec(nx,ny)}{n^{1/3}} \to 0\]
almost surely, and because $\Dc(nx,ny)$ has the same distribution as $\Ec(nx,ny)$, we deduce that
\[\frac{\Dc(nx,ny)}{n^{1/3}} \to 0\]
in probability. By the law of large numbers, it then follows that
\begin{equation}\label{eq: sum up to D conv in prob}
\frac{1}{n^{1/3}} \sum_{j=1}^{\Dc(nx,ny)} \eta_{0j} \to 0
\end{equation}
in probability. Let us note that it is fairly straightforward to generalize the above argument to obtain that
\begin{equation}\label{eq: uniform conv of E}
\frac{\Ec(nx+\tau n^{2/3} t,ny)}{n^{1/3}} \to 0
\end{equation}
uniformly for $t$ in a compact set almost surely. However, while it is true that $\Ec(m,n)$ and $\Dc(m,n)$ have the same distribution for a \emph{fixed} endpoint $(m,n)$, it is not the case that the joint laws of $\{\Ec(m,k): k \in S\}$ and $\{\Dc(m,k): k \in S\}$ are the same for $k$ varying in some set of integers $S$. We therefore cannot conclude that \eqref{eq: uniform conv of E} holds for $\Dc$; this is the only obstacle in obtaining uniform convergence as in \eqref{eq: fluctuations limit SJ} for the generalized SJ model.

We are now ready to conclude. Let $G$ be a bounded, uniformly continuous function on $\RR$, and let $\ep>0$. Then there is a $\delta>0$ such that $|G(x)-G(y)|<\ep$ whenever $|x-y|<\delta$. By \eqref{eq: upper bound for F},
\begin{align*}
\frac{F_H(nx,ny)-nf(x,y)}{\chi n^{1/3}} &\le \frac{F(nx,ny)-nf(x,y)}{\chi n^{1/3}}\\
 &\le \frac{F_H(nx,ny)-nf(x,y)}{\chi n^{1/3}}+\frac{1}{\chi n^{1/3}} \sum_{j=1}^{\Dc(nx,ny)} \eta_{0j}.
\end{align*}
Thus if the scaled fluctuations of $F(nx,ny)$ and $F_H(nx,ny)$ are at least $\delta$ apart from each other, then
\[\frac{1}{\chi n^{1/3}} \sum_{j=1}^{\Dc(nx,ny)} \eta_{0j} \ge \delta.\]
Hence,
\begin{align*}
&\EE \left| G\left(\frac{F(nx,ny)-nf(x,y)}{\chi n^{1/3}}\right)-G\left(\frac{F_H(nx,ny)-nf(x,y)}{\chi n^{1/3}}\right)\right| \\
&\le 2\sup|G| \,\PP\left( \frac{1}{\chi n^{1/3}} \sum_{j=1}^{\Dc(nx,ny)} \eta_{0j} \ge \delta\right)+\ep \rightarrow \ep
\end{align*}
by \eqref{eq: sum up to D conv in prob}. Since $\ep$ was arbitrary, it then follows along with Theorem \ref{th: dauv-nica-vir} applied with $t=0$ that
\begin{align*}
\lim_{n \to \infty} \EE \left[G\left(\frac{F(nx,ny)-nf(x,y)}{\chi n^{1/3}}\right)\right]&=\lim_{n \to \infty} \EE \left[ G\left(\frac{F_H(nx,ny)-nf(x,y)}{\chi n^{1/3}}\right)\right]\\
&=\int_{-\infty}^{\infty} G(x) f_{GUE}(x) dx
\end{align*}
where $f_{GUE}$ is the Tracy--Widom GUE density. This concludes the proof of Theorem \ref{th: fluctuations thm}!

\bibliographystyle{amsplain}
\bibliography{bibliography_SJ}

\end{document}